
\documentclass[12pt, reqno]{amsart}
\usepackage{amsmath, amssymb}

\newtheorem{theorem}{Theorem}[section]
\newtheorem{proposition}[theorem]{Proposition}

\newtheorem{corollary}[theorem]{Corollary}

\newtheorem{remark}[theorem]{Remark}

\newcommand{\Z}{\ensuremath{\mathbb{Z}}}


\RequirePackage{ifthen}
\usepackage[all]{xy}

\textwidth=15.5cm 
\textheight=21cm
\hoffset=-1.2cm 

\begin{document}

\title[Restriction Map]{On the Restriction Map for Jacobi Forms}

\author{B. Ramakrishnan}
\author{K. D. Shankhadhar}
\address{Harish-Chandra Research Institute, 
Chhatnag Road, Jhunsi, Allahabad 211 019 (India)\\
{\sl Current Address}: [K. D. Shankhadhar] The Institute of Mathematical Sciences, CIT Campus, Taramani,
Chennai 600 113 (India)}
\email[B. Ramakrishnan]{ramki@hri.res.in}
\email[K. D. Shankhadhar]{karam@imsc.res.in, karamdeo@gmail.com}

\keywords{Modular forms, Jacobi forms, Restriction map, Differential operators}
\subjclass[2010]{Primary 11F50; Secondary 11F11}

\begin{abstract}
In this article we give a description of the kernel of the restriction map for 
Jacobi forms of index 2 and obtain the injectivity of $D_0\oplus D_2$ on the space of 
Jacobi forms of weight 2 and index 2. We also obtain certain generalization of these 
results on certain subspace of Jacobi forms of square-free index $m$.
\end{abstract}

\maketitle

\vspace{-.8cm}

\section{Introduction}

Let $D_0$ be the restriction  map from the space of Jacobi forms of weight $k$, index $m$ on the congruence subgroup 
$\Gamma_0(N)$ to that of elliptic modular forms of the same weight on $\Gamma_0(N)$ 
given by $\phi(\tau,z) \mapsto \phi(\tau,0)$. 
More generally, one obtains modular forms of weight $k+\nu$ from  Jacobi
forms of weight $k$ by using certain differential operators $D_{\nu}$. 
Then it is known that the direct sum ~$\oplus_{\nu =0}^{m} D_{2\nu}$~ is injective for $k$ even, but in general 
the restriction map $D_0$ is not 
injective (see \cite{AB99, AB2003, e-z} for details). However, J. Kramer \cite{kramer} and T. Arakawa and S. B\"ocherer 
\cite{AB2003} observed that when $k=2$ the situation may be different. In fact, when $m=1$, Arakawa and B\"ocherer 
\cite{AB99} provided two explicit descriptions of  ${\rm Ker}(D_0)$: one in terms of modular forms of weight $k-1$ 
and the other 
in terms of cusp forms of weight $k+2$ (by applying the differential operator $D_2$ on ${\rm Ker}(D_0)$). 
In a subsequent paper \cite{AB2003}, they proved that $D_0$ is injective in the case $k=2$, $m=1$ and gave some 
applications. 
In a private communication to the authors, Professor B\"ocherer informed that one of his students gave a precise 
description of the image of 
$D_0 \oplus D_2$ in terms of vanishing orders at the cusps ($k$ arbitrary, $m=1$). Based on this,
he conjectured that in the case $k=2$, one can remove one of the $D_{2\nu}$ from the direct sum $\oplus_{\nu =0}^{m} 
D_{2\nu}$ without affecting the injectivity. 

In this paper, we generalize the results of \cite{AB99} to higher index.  In \S 3, we consider the case $m=2$ 
and show that ${\rm Ker}(D_0)$ is 
isomorphic to the space of vector-valued modular forms of weight $k-1$ and $D_2({\rm Ker}(D_0))$ is isomorphic to 
a certain subspace of 
cusp forms of weight $k+2$  and these two spaces are related with each 
other by a simple isomorphism (Theorem 3.3).  In \S 3.3, we obtain the 
injectivity of $D_0\oplus D_2$ on $J_{2,2}(\Gamma_0(2N))$, where $N=2$ or an odd square-free positive integer 
(Theorem 3.4). 
This confirms the conjecture made by Professor B\"ocherer partially in the 
index $2$ case (i.e., we can omit the operator $D_4$). 
In \S 4, we consider a subspace of $J_{k,m}(\Gamma_0(mN),\chi)$, where $m$ is a square-free positive integer and 
$N$ is any positive integer 
and obtain results similar to \cite{AB99} and prove the injectivity of $D_0$ on this subspace when $k=2$, and $mN$ 
is square-free (Theorem 4.2 and 
Corollary 4.4). In \S 5, we make several remarks concerning the subspace studied in \S 4. 

\section{Preliminaries}\label{sect:1}

Let $k$, $m$ and $N$ be positive integers and $\chi$ be a Dirichlet character
modulo $N$. We denote the standard generators of the full modular group ~$SL_2(\Z)$~ by   
~$S=\begin{pmatrix}0&-1\\1&0
\end{pmatrix}$, $T=\begin{pmatrix}1&1\\0&1\end{pmatrix}$~ 
and let $I_2=\begin{pmatrix}1&0\\0&1\end{pmatrix}$ be the identity matrix.  
One has the relation ~$S^2=(ST)^3=-I_2$. Let ~$\mathbb{H}$~ denote the complex upper half-plane. 
Define the action of the full Jacobi group ~$SL_2(\mathbb{Z})\ltimes \mathbb{Z}^2$~ on 
~$\mathbb{H}\times \mathbb{C}$~ by
\begin{equation}\label{action}
[\gamma,(\lambda,\mu)](\tau,z)=\left(\frac{a\tau+b}{c\tau+d},\frac{z+\lambda\tau+\mu}{c\tau+d}\right),
\end{equation}
where ~$\gamma=\begin{pmatrix}a&b\\c&d\end{pmatrix} \in SL_2 (\mathbb{Z})$, 
~$\lambda,\mu \in \mathbb{Z}$ and $(\tau,z) \in \mathbb{H} \times \mathbb{C}$. 
We  simply write ~$\gamma(\tau,z)$~for ~\\
$[\gamma,(0,0)](\tau,z)$. We denote the space of Jacobi forms 
of weight $k$, index $m$
and Dirichlet character $\chi$ for the Jacobi group $\Gamma_0(N) \ltimes
\mathbb{Z}^2$ by $J_{k,m}(\Gamma_0(N),\chi)$.
It is well known that any such Jacobi form ~$\phi(\tau,z)$~
can be (uniquely) 
written as
\begin{equation*}\label{eq:1}
\phi(\tau,z) = \sum_{r=0}^{2m-1} h_{m,r}(\tau)\theta_{m,r}^J(\tau,z),
\end{equation*}
with
\begin{equation*}\label{eq:2}
\begin{split}
\theta_{m,r}^J(\tau,z) ~&=~ \sum_{n \in 
\mathbb{Z}}e^{2 \pi i m((n+\frac{r}{2m})^2\tau+2(n+\frac{r}{2m})z)},\\
h_{m,r}(\tau)& = \displaystyle\sum_{n \in \mathbb{Z}\atop{n\ge r^2/4m}}
c_{\phi}(n,r)e^{2\pi i(n-\frac {r^2}{4m})\tau},
\end{split}
\end{equation*}
where $c_\phi(n,r)$ denotes the $(n,r)$-th Fourier coefficient of the Jacobi 
form $\phi$.

\noindent The (column) vector $\Theta^J(\tau,z) = (\theta_{m,r}^J(\tau,z))_{0\leqslant r <2m}$ satisfies
the transformation 
\begin{equation}\label{eq:3}
\Theta^J(\gamma(\tau,z)) = e^{2\pi i m \frac{cz^2}{c\tau +d}}
(c\tau +d)^{\frac{1}{2}} U_m(\gamma) \Theta^J(\tau,z) 
\end{equation}
for all $\gamma = \begin{pmatrix}a&b\\c&d\end{pmatrix}\in SL_2
\mathbb{(Z)}$. Here ~$U_m : SL_2(\mathbb{Z}) \longrightarrow U(2m,\mathbb{C})$~
is a (projective) representation of $SL_2(\mathbb{Z})$. Note that ~$U_m$~ is an example of
a Weil representation (associated with the discriminant form $(D,Q)$ with
~$D=\Z/2m\Z$~ and ~$Q:D \rightarrow \mathbb{Q}/\Z$~ given by $Q(x)=x^2/4m$). 
In the cases $m = 1$ and $2$, it is given by
\begin{equation*}\label{eq:4}
U_1(T)~=~\begin{pmatrix}1&0\\0&
i\end{pmatrix}, ~
U_1(S)~=~\frac{e^{-\pi i/4}}
{\sqrt 2}\begin{pmatrix}1&1\\1&-1\end{pmatrix}.
\end{equation*}
\begin{equation*}\label{eq:5} 
U_2(T)~=~\begin{pmatrix}1&0&0&0
\\0&\sqrt{i}&0&0\\0&0&-1&0\\0&0&0&\sqrt{i}\end{pmatrix}, 
U_2(S)~=~\frac{e^{-\pi i/4}}{2}
\begin{pmatrix}1&1&1&1\\1&-i&-1&i\\1&-1&1&-1\\1&i&-1&-i\end{pmatrix}. 
\end{equation*}
The (column) vector ${\bf{h}} = (h_{m,r})_{0\leqslant{r}<{2m}}$ satisfies the 
following transformation property. For any 
$\gamma = \begin{pmatrix}a&b\\c&d\end{pmatrix} \in \Gamma_0(N)$, we have
\begin{equation}\label{eq:6}
{\bf{h}}(\gamma\tau) = \chi(d) (c\tau+d)^{k-\frac{1}{2}}{\overline{U_m}}(\gamma){\bf{h}}
{(\tau)}, 
\end{equation}
where ~$\gamma\tau = \frac{a\tau+b}{c\tau+d}$.

\noindent 
For any $\gamma = \begin{pmatrix}a&b\\c&d\end{pmatrix} \in \Gamma_0(m)$, 
let $\gamma_m$ denote the $SL_2(\mathbb{Z})$ 
matrix $\begin{pmatrix} a&bm \\ \frac{c}{m}&d
\end{pmatrix}$. Let the matrices $(u_{ij})_{0 \leqslant i,j <2m}$ and 
$(u_{ij}^m)_{0 \leqslant i,j \leqslant 1}$ represent $U_m(\gamma)$ and $U_1(\gamma_m)$ respectively.
Since $\theta_{m,0}^J(\tau,z)=\theta_{1,0}^J(m\tau,mz)$ and $\theta_{m,m}^J(\tau,z)=
\theta_{1,1}^J(m\tau,mz)$,  for any $\gamma = \begin{pmatrix}a&b\\c&d\end{pmatrix}\in \Gamma_0(m)$, 
we have
\begin{equation}\label{eq:7}
\begin{split}
(\theta_{m,0}^{J}, \theta_{m,m}^J)^t (\gamma(\tau,z)) ~&=~ (\theta_{1,0}^{J}, \theta_{1,1}^J)^t (m(\gamma(\tau,z)))
~=~ (\theta_{1,0}^{J}, \theta_{1,1}^J)^t (\gamma_m(m\tau,mz))\\
&= e^{2 \pi im{\frac{cz^2}{c\tau +d}}}(c\tau +d)^{\frac{1}{2}}U_1(\gamma_m) (\theta_{m,0}^{J}, \theta_{m,m}^J)^t (\tau,z).
\end{split}
\end{equation}  
Now, comparing the 
transformation properties for the action of $\gamma \in \Gamma_0(m)$ 
as given in \eqref{eq:3} and \eqref{eq:7} for 
$\theta_{m,0}^J$ and $\theta_{m,m}^J$, we have two linear equations in
$(\theta_{m,r}^J)_{0\leqslant r <2m}$ as follows:
\begin{equation}\label{eq:8}
(u_{00}-u_{00}^m)\theta_{m,0}^J+ (u_{0m}-u_{01}^m)
\theta_{m,m}^J+ \sum_{j\neq 0,m}u_{0j}\theta_{m,j}^J ~=~ 0.
\end{equation}
\begin{equation}\label{eq:9}
(u_{m0}-u_{10}^m)\theta_{m,0}^J+ (u_{mm}-u_{11}^m)
\theta_{m,m}^J+ \sum_{j\neq 0,m}u_{mj}\theta_{m,j}^J ~=~ 0.
\end{equation} 
Since the set $\{ \theta_{m,r}^J \}_{0\leqslant r <2m}$ is linearly independent
over the field of complex numbers $\mathbb{C}$ (see for example \cite[Lemma 3.1]{Z}), we have
\begin{equation*}
\begin{split}
u_{00} &=u_{00}^m, u_{0m}=u_{01}^m, u_{0j}=0 ~~\mbox{for all}~~ j\neq 0,m;\\
u_{m0}&=u_{10}^m, u_{mm}=u_{11}^m, u_{mj}=0 ~~\mbox{for all}~~ j\neq 0,m.
\end{split}
\end{equation*}
Using \eqref{eq:6} and the above observation for $U_m$, we also have
\begin{equation}\label{eq:10}
(h_{m,0}, h_{m,m})^t (\gamma\tau) ~=~ \chi(d) (c\tau+d)^{k-\frac{1}{2}}{\overline{U_{1}}(\gamma_m)} 
(h_{m,0}, h_{m,m})^t (\tau),
\end{equation}
for all $\gamma
 = \begin{pmatrix}a&b\\c&d\end{pmatrix} \in \Gamma_0(mN)$.\\
For any two matrices ~$\gamma, \gamma' \in \Gamma_0(m), (\gamma\gamma')_m=\gamma_m\gamma'_m$. 
Using this property we define a character $\omega_m$ on $\Gamma_0(m)$ by 
\begin{equation}\label{omega}
\omega_m(\gamma) ~=~ \det(U_1(\gamma_m)) \qquad (\gamma \in \Gamma_0(m)).
\end{equation}
Note that the character ~$\omega_1$~ is the same as the character ~$\omega$~ as defined in \cite[p. 311]{AB99}.

Set $\theta_{m,r}(\tau) := \theta^J_{m,r}(\tau,0)$. Putting ~$z=0$~ in \eqref{eq:3}, we get the following
transformation property for the column vector ~$\Theta=(\theta_{m,r})_{0\leq r <2m}$.
\begin{equation}\label{theta-trans}
\Theta(\gamma\tau) = 
(c\tau +d)^{1/2} U_m(\gamma) \Theta(\tau), 
\end{equation}
for any ~$\gamma=\begin{pmatrix}a&b\\c&d\end{pmatrix}\in SL_2(\Z)$. 
Let $\eta(\tau) = e^{2\pi i\tau/24}\prod_{n\ge 1}(1-e^{2\pi in\tau})$ denote the Dedekind eta function.
Combining the equations $(76.1),(78.4)$ and $(78.6)$ of \cite{rada}, one has the following identity:
\begin{equation}\label{eta-identity}
\eta^3(2\tau) = \frac{1}{2} \theta_{1,0}(\tau) \theta_{1,1}(\tau)\sum_{n\in {\mathbb Z}}(-1)^n e^{2\pi in^2\tau}.
\end{equation}
The above identity implies that $\theta_{1,0}$ and $\theta_{1,1}$ have no  
zeros in the upper half-plane.

\noindent
Let ~$\xi(\tau) = (\theta_{1,1}\theta_{1,0}' - \theta_{1,0}\theta_{1,1}')(\tau)$~ be the cusp form of weight $3$ for $SL_2({\mathbb Z})$ with character $\omega$ (see \cite[Proposition 2]{AB99}). 
For any ~$m\geq 1$, define 
\begin{equation}\label{xi_m}
\xi^*_m(\tau) := (\theta_{m,m}\theta_{m,0}' - \theta_{m,0}\theta_{m,m}')(\tau).
\end{equation}
Since ~$\theta_{m,0}(\tau)=\theta_{1,0}(m\tau)$~ and ~$\theta_{m,m}(\tau)=\theta_{1,1}(m\tau)$, we get that
~$\xi_m^*(\tau)=m\xi(m\tau)$. Therefore, $\xi_m^*$~ is a cusp form of weight $3$ for the group $\Gamma_0(m)$ 
with character $\omega_m$. Note that ~$\xi_1^*=\xi$.
It is proved in \cite[Proposition 2]{AB99} and \cite{kramer_1}
that ~$\xi(\tau)=-\pi i \eta^6(\tau)$. Therefore,  
\begin{equation}\label{xi}
\xi^*_m(\tau) ~=~ -\pi i m \eta^6(m\tau)
\end{equation}
and hence ~$\xi_m^*$~ has no zeros in ~$\mathbb{H}$.
 
Let $M_k(\Gamma_0(N),\chi)$ (resp. $S_k(\Gamma_0(N),\chi)$) denote the vector space of modular forms (resp.  cusp forms) 
of weight $k$ for the group $\Gamma_0(N)$ with character $\chi$. 
Let ~$D_0 : J_{k,m}(\Gamma_0(N),\chi) \longrightarrow 
M_k(\Gamma_0(N),\chi)$~ be the restriction map given by $\phi(\tau,z) 
\mapsto \phi(\tau,0)$ and  $D_2$ be the differential operator  
\begin{equation}\label{eq:15}
D_2 ~=~ {\left(\frac{k}{2\pi i}\frac{\partial^2}{\partial z^2} - 
4\frac{\partial}{\partial\tau}\right)} 
\Big|_{z=0}, 
\end{equation}
which acts on holomorphic functions on $\mathbb{H} \times \mathbb{C}$. 
Note that ~$D_2$~ maps the space Jacobi form $J_{k,m}(\Gamma_0(N),\chi)$ into the space of 
cusp forms $S_{k+2}(\Gamma_0(N), \chi)$. We denote the kernel of the restriction map ~$D_0:
J_{k,m}(\Gamma_0(N),\chi)\rightarrow M_{k}(\Gamma_0(N),\chi)$~ by ~$J_{k,m}(\Gamma_0(N),\chi)^0$.
Without loss of generality we assume that the space ~$M_k(\Gamma_0(N),\chi)$~ is non-empty, otherwise  
${\rm Ker}(D_0)$ is the entire space ~$J_{k,m}(\Gamma_0(N),\chi)$. In particular, this implies that $\chi(-1)=(-1)^k$. Using the transformation property of any Jacobi form for the matrix ~$-I_2$~, we get the symmetry relation 
~$h_{m,r}(\tau) = h_{m,2m-r}(\tau)$~ for all $r \in \Z/2m\Z$. 

\section{The space of Jacobi forms of index $2$}\label{sect:3}
 
Throughout this section, we assume that  $N= 2$, or  an odd square-free positive integer.
In this section, we study the kernel of the restriction map $D_0$ 
for the space of index $2$ Jacobi forms on $\Gamma_0(2N)$ and deduce the injectivity 
of $D_0\oplus D_2$ in the weight $2$ case. 

It is elementary to verify that the set 
$$
X = \left\{V = (v_{ij})_{0\leq i,j\leq 3} : v_{ij}=0 \iff i+j \not\equiv 0\pmod{2}, v_{11}=v_{33}, v_{13}=v_{31}\right\}
$$
is a subgroup of $GL_4({\mathbb C})$ and hence it is easy to verify that the function 
\begin{equation}\label{r-char}
r(V) = v_{11} + v_{13}
\end{equation}
is a character on the subgroup $X$. It is known that  the group $\Gamma_0(2)$ is generated by the matrices 
~$-I_2, T$~ and ~$ST^2S$. The action of $U_2$ on the generators of $\Gamma_0(2)$ is given by
\begin{equation*}\label{eq:18}
U_2(-I_2) = \begin{pmatrix}
-i&0&0&0\\0&0&0&-i\\0&0&-i&0\\0&-i&0&0\end{pmatrix}, 
U_2(T) = \begin{pmatrix} 1 &0&0&0\\ 0& \sqrt{i}&0&0\\  0&0&-1&0\\ 0&0&0&\sqrt{i}\end{pmatrix}  
\end{equation*}
and
\begin{equation*}
U_2(ST^{2}S) = \frac{1}{2i}\begin{pmatrix}
1+i&0&1-i&0\\0&1-i&0&1+i\\1-i&0&1+i&0\\0&1+i&0&1-i\end{pmatrix}.
\end{equation*}
Therefore, it follows that $U_2(\gamma) \in X$ for every $\gamma\in \Gamma_0(2)$. Thus, 
we have a (projective) representation of $\Gamma_0(2)$ defined by 
\begin{equation}\label{rho2}
\rho_2(\gamma) = r(U_2(\gamma))^{-1} \overline{U}_1(\gamma_2),
\end{equation}
where $\gamma_2=\begin{pmatrix}a&2b\\c/2&d\end{pmatrix}\in SL_2({\mathbb Z})$, with  
$\gamma=\begin{pmatrix}a&b\\c&d\end{pmatrix}\in \Gamma_0(2)$ and $r$ is the character defined 
by \eqref{r-char}. Since $U_2(\gamma) \in X$ for all 
$\gamma \in \Gamma_0(2)$, equation  
\eqref{theta-trans} gives us the  following transformation properties.
\begin{equation}\label{eq:3.1}
\theta_{2,1}(\gamma\tau)=(c\tau+d)^{1/2}(u_{11}\theta_{2,1}+u_{13}\theta_{2,3})(\tau),
\end{equation}
\begin{equation}\label{eq:3.2}
\theta_{2,3}(\gamma\tau)=(c\tau+d)^{1/2}(u_{13}\theta_{2,1}+u_{11}\theta_{2,3})(\tau),
\end{equation}
where $U_2(\gamma) = (u_{ij})_{0\le i,j\le 3}$ as assumed before. One also has the following relations.
\begin{equation}\label{theta-23}
\theta_{2,1}(\tau)=\sum_{n\in \Z}e^{4\pi i(n+\frac{1}{4})^2\tau}=\sum_{n\in \Z}e^{4\pi i(n-\frac{1}{4})^2\tau}=
\sum_{n\in \Z}e^{4\pi i(n+\frac{3}{4})^2\tau}=\theta_{2,3}(\tau),
\end{equation}
\begin{equation}\label{theta-12}
\begin{split}
\theta_{1,1}(\tau)=\sum_{n\in \Z}e^{2\pi i(n+\frac{1}{2})^2\tau}=\sum_{n\in \Z}e^{2\pi i(2n+\frac{1}{2})^2\tau}+
\sum_{n\in \Z}e^{2\pi i((2n+1)+\frac{1}{2})^2\tau}
& =\theta_{2,1}(2\tau)+\theta_{2,3}(2\tau)\\
& =2\theta_{2,1}(2\tau).
\end{split}
\end{equation}
From \eqref{theta-12}, we have ~$\theta_{2,1}(\tau)=\frac{1}{2}\theta_{1,1}(\tau/2)$~ and since ~$\theta_{1,1}$~ has no zeros in $\mathbb{H}$, $\theta_{2,1}$~ also has no zeros in the upper half-plane $\mathbb{H}$.

\subsection{Connection to the space of vector valued modular forms}
\label{subsect:4}
 
We start now from a Jacobi form $\phi \in J_{k,2}(\Gamma_0(2N), \chi)^0$, the kernel of the restriction 
map $D_0$. This implies that 
\begin{equation*}\label{eq:21}
0=\phi(\tau,0)=h_{2,0}(\tau)\theta_{2,0}(\tau)+2h_{2,1}
(\tau)\theta_{2,1}(\tau) +
h_{2,2}(\tau)\theta_{2,2}(\tau).
\end{equation*}
Define two new functions as 
\begin{equation}\label{eq:22}
\varphi_0(\tau) := {\frac{h_{2,0}}{\theta_{2,1}}}(\tau), \qquad  
\varphi_2(\tau) := {\frac{h_{2,2}}{\theta_{2,1}}}(\tau).
\end{equation}
As observed before, $\theta_{2,1}$ has no zeros in the upper half-plane, and hence, $\varphi_0$ and $\varphi_2$ are holomorphic in the upper half-plane.
\begin{proposition}\label{prop:2}
Let  $\varphi_0$ and $\varphi_2$ be as in \eqref{eq:22}. Then,  $(\varphi_0, \varphi_2)^t$ is a vector valued modular form on $\Gamma_0(2N)$ of weight $(k-1)$ with character $\chi$ and representation $\rho_2$.
We denote the space of all such vector valued modular forms by $M_{k-1}(\Gamma_0(2N),\chi;\rho_2)$.
\end{proposition}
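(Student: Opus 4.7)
The plan is to derive the transformation law for $(\varphi_0,\varphi_2)^t$ under $\Gamma_0(2N)$ by dividing the transformation law for the pair $(h_{2,0},h_{2,2})^t$ by the scalar transformation law for $\theta_{2,1}$; this division is legitimate because $\theta_{2,1}$ has no zeros on $\mathbb{H}$, as recorded right after \eqref{theta-12}. Fix $\gamma=\begin{pmatrix}a&b\\c&d\end{pmatrix}\in\Gamma_0(2N)\subset\Gamma_0(2)$. Both \eqref{eq:3.1} (valid because $U_2(\gamma)\in X$ for every $\gamma\in\Gamma_0(2)$) and \eqref{eq:10} (specialized to $m=2$ and the group $\Gamma_0(mN)=\Gamma_0(2N)$) then apply.

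Using the identity $\theta_{2,1}=\theta_{2,3}$ from \eqref{theta-23}, the right-hand side of \eqref{eq:3.1} collapses to
\begin{equation*}
\theta_{2,1}(\gamma\tau)=(c\tau+d)^{1/2}\bigl(u_{11}+u_{13}\bigr)\theta_{2,1}(\tau)=(c\tau+d)^{1/2}\,r(U_2(\gamma))\,\theta_{2,1}(\tau),
\end{equation*}
by the very definition \eqref{r-char} of the character $r$. Combining this with the formula
\begin{equation*}
(h_{2,0},h_{2,2})^t(\gamma\tau)=\chi(d)(c\tau+d)^{k-\frac{1}{2}}\,\overline{U_1(\gamma_2)}\,(h_{2,0},h_{2,2})^t(\tau)
\end{equation*}
supplied by \eqref{eq:10} and dividing both sides by $\theta_{2,1}(\gamma\tau)$ yields
\begin{equation*}
(\varphi_0,\varphi_2)^t(\gamma\tau)=\chi(d)(c\tau+d)^{k-1}\,r(U_2(\gamma))^{-1}\,\overline{U_1(\gamma_2)}\,(\varphi_0,\varphi_2)^t(\tau)=\chi(d)(c\tau+d)^{k-1}\,\rho_2(\gamma)\,(\varphi_0,\varphi_2)^t(\tau),
\end{equation*}
which is exactly the weight $k-1$, character $\chi$, representation $\rho_2$ transformation law prescribed by \eqref{rho2}.

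It remains to check the analytic conditions. Holomorphy on $\mathbb{H}$ is immediate from the holomorphy of $h_{2,0}$ and $h_{2,2}$ together with the nonvanishing of $\theta_{2,1}$. Holomorphy at the cusps reduces, after conjugating by a coset representative and using the transformation law just established, to an analysis of $q$-expansions at $\infty$: the $h_{2,r}$ have the bounded expansions coming from the very definition of a Jacobi form, while the principal part of $\theta_{2,1}$ at $\infty$ can be read off from \eqref{theta-12} and the standard $q$-expansion of $\theta_{1,1}$. The conceptual heart of the argument is the single identity $(u_{11}+u_{13})\theta_{2,1}=r(U_2(\gamma))\theta_{2,1}$, which rests on the block structure of the subgroup $X$ together with $\theta_{2,1}=\theta_{2,3}$; the main potential obstacle is not this computation but rather the bookkeeping at the cusps of $\Gamma_0(2N)$ needed to verify the expected vector-valued holomorphy there, which I expect to be routine once the transformation law is in hand.
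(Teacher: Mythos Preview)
Your derivation of the transformation law is correct and matches the paper's argument line for line. The gap is in the cusp conditions, which are not routine in the way you suggest and which in fact require the hypothesis $\phi\in\mathrm{Ker}(D_0)$ that your argument never invokes. The transformation law you established holds only for $\gamma\in\Gamma_0(2N)$, so conjugating by a coset representative $g\in SL_2(\mathbb{Z})\setminus\Gamma_0(2N)$ does \emph{not} reduce the other cusps to $\infty$; each cusp must be treated on its own. Even at $\infty$, mere boundedness of the $h_{2,r}$ is insufficient: since $\theta_{2,1}(\tau)=q^{1/8}(1+\cdots)$, the quotient $h_{2,0}/\theta_{2,1}$ is holomorphic at $\infty$ only if the constant term $c_\phi(0,0)$ of $h_{2,0}$ vanishes, and this is exactly what $\phi(\tau,0)=0$ supplies.

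At the remaining cusps the paper does not attempt a direct comparison of $q$-expansions of $h_{2,r}$ and $\theta_{2,1}$ at all. Instead it uses the kernel relation once more, in the form $\varphi_0\theta_{2,0}+\varphi_2\theta_{2,2}=-2h_{2,1}$, and shows via the explicit entries of $U_2(g)$ for $g=S$ or $g=ST^{-c}S$ that each component of the transformed pair $(\theta_{2,0},\theta_{2,2})$ tends to a nonzero limit at those cusps; holomorphy of $h_{2,1}$ then forces $(\varphi_0,\varphi_2)$ to be bounded. The standing assumption that $N=2$ or $N$ is odd square-free is used precisely here, to guarantee that for every relevant divisor $c\mid 2N$ one has $c$ odd or $2\Vert c$, which is what makes the needed entries of $U_2(g)$ nonzero.
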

\begin{proof} \label{proof:2} 
We have to check the transformation property and holomorphy 
condition at the cusps. First we check the transformation property. 
Let $\gamma = \begin{pmatrix}a&b\\c&d\end{pmatrix} \in \Gamma_0(2N)$ with $\gamma_2 = 
\begin{pmatrix} a& 2b \\ c/2& d\end{pmatrix}$ as defined in \S 2. Using the definition of 
$\varphi_0, \varphi_2$ and using the transformation \eqref{eq:10} with $m=2$, 
we have
\begin{equation}\label{eq:24}
\theta_{2,1}(\gamma\tau)(\varphi_0, \varphi_2)^t
(\gamma\tau) = \chi(d) (c\tau+d)^{k-\frac{1}{2}} {\overline{U_1}(\gamma_2)}\theta_{2,1}(\tau)
(\varphi_0, \varphi_2)^t(\tau).
\end{equation}
Using \eqref{eq:3.1} with \eqref{theta-23},  we get ~$\theta_{2,1}(\gamma\tau) = 
(u_{11}+u_{13})(c\tau+d)^{\frac{1}{2}}
\theta_{2,1}(\tau)$. Since, $(u_{11}+u_{13}) = r(U_2(\gamma))$, by using the definition of the representation $\rho_2$ given by 
\eqref{rho2}, the above formula \eqref{eq:24} gives the following transformation property:
\begin{equation}\label{eq:25}
(\varphi_0,\varphi_2)^t(\gamma\tau) = \chi(d)(c\tau+d)^{k-1} 
{\rho_2}(\gamma)(\varphi_0, \varphi_2)^t(\tau).
\end{equation}
It remains to investigate the behaviour of
$\varphi_0$ and $\varphi_2$ at each cusp of $\Gamma_0(2N)$. A complete 
set of cusps of $\Gamma_0(2N)$ is given by the numbers $\frac{a}{c}$ 
where $c$ runs over positive divisors of $2N$ and for a given $c$, 
$a$ runs through integers with $1 \leqslant a \leqslant 2N$, 
gcd$(a,2N) = 1$ that are inequivalent modulo $\gcd(c,\frac{2N}{c})$. 
For any $N$, the cusps corresponding to the divisors $1$ and $2N$, 
(i.e., the cusps $1$ and $\frac{1}{2N}$) are equivalent to $0$
and $\infty$ respectively. So we can assume 
that all the cusps of $\Gamma_0(2N)$ are given by $\infty$, 
$0$ and $\frac{1}{c}$ 
with $c|2N$ and $c \neq 1,2N$. Now choose any 
cusp $s$ from the above list. We divide our cusp condition 
verification into the following two cases :\\
\underline{Case 1:} ~Suppose that $s = \infty $. Let us denote ~$e^{2\pi i \tau}$~ by ~$q$.
If $\phi \in {\rm Ker}(D_0)$, then $\phi(\tau,0) = \sum c_\phi(n,r) q^n =0$ implies that $c_{\phi}(0,0) = 0$.
Therefore, $h_{2,0}(\tau) = \displaystyle\sum_{n\geqslant0}c_{\phi}(n,0)q^n 
= q\displaystyle\sum_{n\geqslant 1}c_{\phi}(n,0)q^{n-1}$. 
Also we have $h_{2,2}(\tau) = \displaystyle\sum_{n \geqslant 
\frac{1}{2}}c_{\phi}(n,2) q^{n-\frac{1}{2}} = 
q^{\frac{1}{2}}\displaystyle\sum_{ n\geqslant 1}c_{\phi}(n,2)
q^{n-1}$ 
and $\theta_{2,1}(\tau) = \displaystyle\sum_{n\in 
\mathbb{Z}} q^{2(n+\frac{1}{4})^2} = q^{\frac{1}{8}}\displaystyle
\sum_{n\in \mathbb{Z}}q^{(2n^2+n)}$.
Now the holomorphicity at the cusp $\infty$ of the functions 
$\varphi_0 = \frac{h_{2,0}}{\theta_{2,1}}$ and $\varphi_2 = \frac{h_{2,2}}{\theta_{2,1}}$ 
follow from the $q$ expansions of $h_{2,0}$, $h_{2,2}$ and $\theta_{2,1}$.\\
\underline{Case 2:} ~Suppose that $s \neq \infty$ and choose a matrix ~$g\in SL_2(\Z)$~ such that ~$g(\infty)=s$.  Explicitly, if $s=0$ or $s = \frac{1}{c}$ we choose $g$ as $S$ or $ST^{-c}S$. Then we have
\begin{equation*}
\begin{split}
(c\tau+d)^{-k+\frac{1}{2}}(-2 h_{2,1}(g \tau)) &= (c\tau +d)^{-k+
\frac{1}{2}}(\varphi_0(g\tau), \varphi_2(g\tau))(\theta_{2,0}(g\tau), 
\theta_{2,2}(g\tau))^t\\
&= (c\tau + d)^{-k+1}(\varphi_0(g\tau),\varphi_2(g\tau)) (c\tau+d)^{-\frac{1}{2}}
(\theta_{2,0}
(g\tau), \theta_{2,2}(g\tau))^t.
\end{split}
\end{equation*}
If ~$g=ST^{-c}S$~ then the ~$(0,0)$-th and ~$(2,0)$-th entries of ~$U_2(g)=U_2(S)U_2(T^{-c})U_2(S)$~ are equal (upto some constants) ~$1+(-1)^c+2i^{-c/2}$~ and ~$1+(-1)^c-2i^{-c/2}$~ respectively.
Thus, for the above choices of $g$, the matrix $U_2(g)$ will have nonzero entries at the $(0,0)$-th place and the $(2,0)$-th place since either $g$ is $S$ or $2\Vert c$ or $c$ is odd. This fact can also be obtained by using general formulas for the Weil representations (see for example,  \cite{{sch}, {str}}). Using the transformation property given by \eqref{theta-trans} together with the above observations, we see that 
~$(c\tau+d)^{-\frac{1}{2}}(\theta_{2,0}(g\tau), \theta_{2,2}(g\tau))^t$~ 
is a column vector such that each component have ~$\theta_{2,0}(\tau)$.
Since ~$\theta_{2,0} \rightarrow 1$~ and all other theta components tend to ~$0$~ as
Im$(\tau) \rightarrow \infty$, each component of the above column vector tends to a non-zero limit as Im$(\tau)$ tends to $\infty$. Together with the holomorphicity of ~$h_{2,1}$,~ this shows that 
$(c\tau + d)^{-k+1}(\varphi_0(g\tau),\varphi_2(g\tau))$ tends to a finite limit as Im$(\tau)$ goes to $\infty$. The required cusp conditions now follow. 
\end{proof}

\smallskip

Conversely, let $(\varphi_0, \varphi_2)^t$ be a  vector valued modular form in 
$M_{k-1}(\Gamma_0(2N),\chi;\rho_2)$. We now define $\phi(\tau,z)$ by 
\begin{equation}\label{eq:30}
\begin{split}
\phi(\tau,z) &= \varphi_0(\tau)\theta_{2,1}(\tau)
\theta_{2,0}^J(\tau,z)-{\frac{1}{2}}
(\varphi_0\theta_{2,0}+\varphi_2\theta_{2,2})(\tau)
(\theta_{2,1}^J+\theta_{2,3}^J)(\tau,z) \\
&\hskip 7cm + \varphi_2(\tau)\theta_{2,1}(\tau)\theta_{2,2}^J(\tau,z).
\end{split}
\end{equation}
Using the transformation properties for $(\varphi_0,\varphi_2)$ and the theta functions with respect to   $\Gamma_0(2N)$, we see that $\phi \in J_{k,2}(\Gamma_0(2N), \chi)$. Clearly, by definition, 
$\phi \in {\rm Ker}(D_0)$. Thus, we have obtained the following theorem. 
\begin{theorem}\label{thm:3}
There is a linear isomorphism
$$
\Lambda_2 : J_{k,2}(\Gamma_0(2N),\chi)^0 \longrightarrow  
M_{k-1}(\Gamma_0(2N),\chi;\rho_2), 
$$ 
given by ~$\phi \mapsto (\varphi_0,\varphi_2)^t$, where ~$\varphi_0$~ and ~$\varphi_2$~ are defined by \eqref{eq:22}.
The inverse of ~$\Lambda_2$~ is given by \eqref{eq:30}.
\end{theorem}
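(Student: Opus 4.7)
The plan is to assemble the theorem from three pieces: well-definedness and $\rho_2$-equivariance of $\Lambda_2$ (which Proposition 3.2 has already handled), injectivity of $\Lambda_2$, and surjectivity via the explicit inverse \eqref{eq:30}. Linearity is immediate from the definitions $\varphi_0 = h_{2,0}/\theta_{2,1}$ and $\varphi_2 = h_{2,2}/\theta_{2,1}$.

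For injectivity, I would suppose $\Lambda_2(\phi) = (0,0)^{t}$, so that $h_{2,0} = \varphi_0 \theta_{2,1} = 0$ and $h_{2,2} = \varphi_2 \theta_{2,1} = 0$. Combining this with the kernel condition $\phi(\tau,0)=0$, the theta-decomposition of $\phi$ collapses to
\[
0 = \phi(\tau,0) = h_{2,0}\theta_{2,0} + h_{2,1}\theta_{2,1} + h_{2,2}\theta_{2,2} + h_{2,3}\theta_{2,3} = 2 h_{2,1}\theta_{2,1},
\]
using the symmetry $h_{2,3} = h_{2,1}$ recorded at the end of \S 2 together with the identity $\theta_{2,1}=\theta_{2,3}$ from \eqref{theta-23}. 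Since $\theta_{2,1}$ has no zeros in $\mathbb{H}$, this forces $h_{2,1} = 0$, and hence $\phi = 0$.

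For surjectivity, starting from $(\varphi_0,\varphi_2)^t \in M_{k-1}(\Gamma_0(2N),\chi;\rho_2)$, I would define $\phi$ by \eqref{eq:30} and read off the theta-coefficients $h_{2,0} = \varphi_0\theta_{2,1}$, $h_{2,2} = \varphi_2\theta_{2,1}$, and $h_{2,1} = h_{2,3} = -\tfrac{1}{2}(\varphi_0\theta_{2,0}+\varphi_2\theta_{2,2})$. The kernel condition $\phi(\tau,0)=0$ is then an immediate algebraic identity using $\theta_{2,1}=\theta_{2,3}$. The main task is to verify that the vector $\mathbf{h}=(h_{2,r})_{0\le r\le 3}$ satisfies the transformation law \eqref{eq:6} under $\Gamma_0(2N)$. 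I would split this into the even-indexed coordinates $(h_{2,0},h_{2,2})$, whose transformation via $\overline{U_1(\gamma_2)}$ follows by combining the $\rho_2$-equivariance of $(\varphi_0,\varphi_2)^t$ with the transformation of $\theta_{2,1}$ from \eqref{eq:3.1}--\eqref{eq:3.2} (the factor $r(U_2(\gamma))$ for $\theta_{2,1}$ precisely cancels the $r(U_2(\gamma))^{-1}$ in the definition \eqref{rho2} of $\rho_2$), and the odd-indexed coordinate $h_{2,1}$, whose transformation one checks directly from the block structure of $U_2$ on $X$ together with \eqref{theta-trans}. The cusp conditions are handled exactly as in Case 1 and Case 2 of the proof of Proposition 3.2 but read in the reverse direction, using that $\varphi_0,\varphi_2$ are holomorphic at the cusps and that $\theta_{2,0},\theta_{2,1},\theta_{2,2}$ are bounded at every cusp.

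The main obstacle is the transformation verification for $\mathbf{h}$: one must be careful that the conjugation (since \eqref{eq:6} involves $\overline{U_m}$) and the projective scalar $r(U_2(\gamma))^{-1}$ built into $\rho_2$ fit together consistently on both the even and odd index parts, and in particular that the single scalar factor coming from $\theta_{2,1}$ on the right of \eqref{eq:25} simultaneously accounts for the representation $U_2$ on all four components of $\mathbf{h}$. Once the block-diagonal structure of $U_2$ on $X$ is used, the verification reduces to two $2\times 2$ matrix identities that mirror the argument in Proposition 3.2 and present no further difficulty.
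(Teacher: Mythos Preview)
Your proposal is correct and follows essentially the same route as the paper: Proposition~\ref{prop:2} supplies well-definedness of $\Lambda_2$, and the paper then simply writes down \eqref{eq:30}, asserts that the transformation properties of $(\varphi_0,\varphi_2)$ and of the theta functions give $\phi\in J_{k,2}(\Gamma_0(2N),\chi)$, and notes that $\phi(\tau,0)=0$ by construction. Your separate injectivity argument is correct but redundant, since once \eqref{eq:30} is shown to land in $J_{k,2}(\Gamma_0(2N),\chi)^0$ it is visibly a two-sided inverse of $\Lambda_2$ (from \eqref{eq:30} one reads $h_{2,0}=\varphi_0\theta_{2,1}$, $h_{2,2}=\varphi_2\theta_{2,1}$). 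Your ``main obstacle'' is real but resolves cleanly: after the permutation $(0,2,1,3)$ the matrix $U_2(\gamma)$ is block-diagonal and unitary, so the odd $2\times2$ block has $|u_{11}+u_{13}|=1$, whence $r(U_2(\gamma))^{-1}=\overline{r(U_2(\gamma))}$; this, together with the unitarity relation $\overline{U_1(\gamma_2)}^{t}U_1(\gamma_2)=I$, makes the odd-indexed verification go through exactly as you outline.
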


\subsection{Connection to the space of cusp forms}

Let $D_2$ be the differential operator as defined in \eqref{eq:15} and $\phi \in {\rm Ker}(D_0)$  be  given by \eqref{eq:30}. Then proceeding as in \cite[Section 3]{AB99} and using 
the differential equations
\begin{equation}\label{eq:16}
{\frac{\partial^2}{\partial z^2}}\theta_{m,r}^J = 4 m (2\pi i){\frac{\partial}
{\partial\tau}}
\theta_{m,r}^J \quad \mbox{ ~for~ } r \in\{0,1,....2m-1\},
\end{equation} 
we obtain
\begin{equation*}\label{eq:31}
D_2(\phi) = 8 k \left( \varphi_0 (\theta_{2,1}\theta_{2,0}' - 
\theta_{2,0}\theta_{2,1}') +
\varphi_2 (\theta_{2,1}\theta_{2,2}' - \theta_{2,2}
\theta_{2,1}') \right).
\end{equation*}
We define $\xi_0 := \theta_{2,1}\theta_{2,0}' - 
\theta_{2,0}\theta_{2,1}'$ and $\xi_2 := 
\theta_{2,1}\theta_{2,2}' - \theta_{2,2}\theta_{2,1}'$. Then
\begin{equation}\label{eq:32}
D_2(\phi) = 8k (\varphi_0, \varphi_2)(\xi_0, \xi_2)^t.
\end{equation}
Proceeding as in \cite[Proposition 2]{AB99}, for any $\gamma=\begin{pmatrix}a&b\\c&d\end{pmatrix}\in \Gamma_0(2)$, we have
\begin{equation}\label{trans-xi}
(\xi_0, \xi_2)^t (\gamma\tau) ~=~ (c\tau+d)^3(\rho_2(\gamma)^{-1})^t (\xi_0, \xi_2)^t (\tau).
\end{equation}
Analysing the behaviour of $\xi_0$ and $\xi_2$ at the cusps of $\Gamma_0(2)$, we find that the vector ~$(\xi_0,\xi_2)^t$~
is a vector valued cusp form for $\Gamma_0(2)$
of weight $3$ and representation 
$({{\rho}_2^{-1})}^t$. Define the space
\begin{equation*}\label{eq:321}
\begin{split}
S_{k+2}(\Gamma_0(2N),\chi)^0 \hskip 12.25cm &\\
~~ := \{ f\in S_{k+2}(\Gamma_0(2N),\chi) {\bf :} ~f =
\varphi_0\xi_0 + \varphi_2\xi_2 {\rm ~with~} (\varphi_0, \varphi_2)^t \in M_{k-1}
(\Gamma_0(2N), \chi;\rho_2)\}.&\\
\end{split}
\end{equation*} 
We summarize the results of \S 3 in the following. 
\begin{theorem}\label{thm:4}
The map $D_2: J_{k,2}(\Gamma_0(2N),\chi) \longrightarrow
S_{k+2}(\Gamma_0(2N),\chi)$ induces an isomorphism between 
$J_{k,2}
(\Gamma_0(2N),\chi)^0$ and $S_{k+2}(\Gamma_0(2N),\chi)^0$.
More precisely, we have the following commutative diagram of isomorphisms: 
$$
\xymatrix{
& {J_{k,2}(\Gamma_0(2N),\chi)^0} \ar[ld]_{\Lambda_2} \ar[dr]^{D_2} \\
{M_{k-1}(\Gamma_0(2N),\chi; {\rho_2})} \ar@{<->}[rr] && 
  S_{k+2}(\Gamma_0(2N),\chi)^0 } 
$$  
where the isomorphism in the bottom of the  diagram is given by\\
$$
(\varphi_0, \varphi_2)^t \mapsto 8 k (\xi_0\varphi_0 + \xi_2\varphi_2).
$$
\end{theorem}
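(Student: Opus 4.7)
The plan is to use the commutative diagram as the organizing principle: by equation \eqref{eq:32}, the restriction $D_2|_{J_{k,2}(\Gamma_0(2N),\chi)^0}$ factors as $B \circ \Lambda_2$, where $\Lambda_2$ is the isomorphism of Theorem \ref{thm:3} and $B:(\varphi_0,\varphi_2)^t \mapsto 8k(\varphi_0\xi_0 + \varphi_2\xi_2)$ is the bottom map of the diagram. This yields commutativity immediately, and reduces the theorem to showing that $B$ is a linear isomorphism onto $S_{k+2}(\Gamma_0(2N),\chi)^0$.

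To see that $B$ lands inside $S_{k+2}(\Gamma_0(2N),\chi)$, I combine the transformation \eqref{eq:25} for $(\varphi_0,\varphi_2)^t$ with \eqref{trans-xi} for $(\xi_0,\xi_2)^t$: the matrices $\rho_2(\gamma)$ and $(\rho_2^{-1})^t(\gamma)$ cancel in the scalar pairing, so for any $\gamma \in \Gamma_0(2N)$ the product $\varphi_0\xi_0+\varphi_2\xi_2$ transforms as a scalar modular form of weight $(k-1)+3 = k+2$ and character $\chi$. Cuspidality is inherited from the cuspidality of $(\xi_0,\xi_2)^t$ (established just before the theorem statement) together with holomorphicity of $(\varphi_0,\varphi_2)^t$ at the cusps. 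Surjectivity of $B$ onto $S_{k+2}(\Gamma_0(2N),\chi)^0$ is then automatic from the definition of that subspace.

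The substantive step is the injectivity of $B$. Suppose $\varphi_0\xi_0+\varphi_2\xi_2=0$. Using the identities $\xi_0=\theta_{2,1}^2(\theta_{2,0}/\theta_{2,1})'$ and $\xi_2=\theta_{2,1}^2(\theta_{2,2}/\theta_{2,1})'$ together with the nonvanishing of $\theta_{2,1}$ on $\mathbb{H}$ (via \eqref{theta-12} and the nonvanishing of $\theta_{1,1}$), the relation rewrites as $\varphi_0\,d(\theta_{2,0}/\theta_{2,1})+\varphi_2\,d(\theta_{2,2}/\theta_{2,1})=0$. I would then combine this differential identity with the fractional $q$-shifts of $\varphi_0,\varphi_2$ at each cusp prescribed by $\rho_2$ (computed directly from $U_2$ and $U_1$), together with the cusp-by-cusp analysis already developed in Case 2 of the proof of Proposition \ref{prop:2}, to force all Fourier coefficients of $\varphi_0,\varphi_2$ to vanish. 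The main obstacle is precisely this injectivity: the transformation and cuspidality checks reduce to routine bookkeeping once the representation data is organized, but verifying non-degeneracy of the pairing against $(\xi_0,\xi_2)^t$ appears to require genuine input from the structure of the Weil representation $U_2$ at each cusp rather than a purely formal manipulation.
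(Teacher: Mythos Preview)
Your decomposition $D_2\big|_{\ker D_0}=B\circ\Lambda_2$, the check that $B$ lands in $S_{k+2}(\Gamma_0(2N),\chi)$ by pairing \eqref{eq:25} with \eqref{trans-xi}, and the remark that surjectivity onto $S_{k+2}(\Gamma_0(2N),\chi)^0$ is tautological from its definition, are exactly the paper's argument: Theorem~\ref{thm:4} is stated as a direct summary of the computations in \S3.2 with no additional proof supplied.

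Where you depart from the paper is in the treatment of injectivity of $B$. The paper gives no argument for it at the point of Theorem~\ref{thm:4}. When the kernel of $B$ is actually analysed, in \S3.3, the method is not your Fourier--coefficient approach at all: from $\varphi_0\xi_0+\varphi_2\xi_2=0$ one introduces the scalar function $\psi=\varphi_0/\xi_2=-\varphi_2/\xi_0$, proves it is holomorphic on $\mathbb H$ via the identity $\theta_{2,2}\xi_0-\theta_{2,0}\xi_2=2\theta_{2,1}\xi_2^*$ together with the nonvanishing of $\theta_{2,1}\eta^6(2\tau)$, and then reads off the scalar transformation law \eqref{trans-psi} of weight $k-4$. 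This reduces the question to the vanishing of a single nearly--modular function rather than to a cusp--by--cusp Fourier computation. Note, though, that the paper only forces $\psi=0$ under the extra hypothesis $k=2$, using the vanishing criteria of \cite{AB2003}; for general $k$ the injectivity of $B$ (equivalently, of $D_0\oplus D_2$ on $J_{k,2}$) is not actually established in the text. So your instinct that this is the one nontrivial point is correct, but the mechanism the paper uses to attack it is the auxiliary function $\psi$, not the Weil--representation bookkeeping you outline.
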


\subsection{Injectivity of $D_0\oplus D_2$} 

In this section, we shall prove the injectivity of the operator $D_0\oplus D_2$ in the weight $2$ case. 
Let $\phi \in {\rm Ker}(D_0\oplus D_2)$. Then $\phi \in {\rm Ker}(D_0)$ and so by \eqref{eq:32} we get 
$D_2(\phi) = 8k (\varphi_0 \xi_0 + \varphi_2 \xi_2)$. Now using the fact that $\phi \in {\rm Ker}(D_2)$, we obtain 
\begin{equation*}
0 = D_2(\phi) = 8k(\varphi_0 \xi_0 + \varphi_2\xi_2),
\end{equation*} 
which gives $\varphi_0\xi_0 + \varphi_2\xi_2 = 0$. Define
\begin{equation}\label{psi}
\psi(\tau) = \frac{\varphi_0}{\xi_2}(\tau) = \frac{-\varphi_2}{\xi_0}(\tau).
\end{equation} 
Using the definitions of ~$\xi_0$~ and ~$\xi_2$, we have
\begin{equation*}
(\theta_{2,2}\xi_0-\theta_{2,0}\xi_2) (\tau) = \theta_{2,1}(\tau)(\theta_{2,2}\theta_{2,0}'-\theta_{2,0}\theta_{2,2}')(\tau) 
= 2\theta_{2,1}\xi^*_2(\tau), 
\end{equation*}
where ~$\xi_2^*$~ is defined by \eqref{xi_m} and is a cusp form of weight $3$ on $\Gamma_0(2)$ with character $\omega_2$.  If for any $\tau \in \mathbb{H}$, $\xi_0(\tau)=\xi_2(\tau) =0$, then the above equation implies (using \eqref{xi})  that 
~$\theta_{2,1}(\tau)\eta^6(2\tau)=0$, which is not true. Therefore the function $\psi$ (defined by \eqref{psi}) is holomorphic in the upper half-plane. 
We shall now prove that $\psi$ is a modular function of weight $k-4$ with respect to the group $\Gamma_0(2N)$. 
Let $\gamma \in \Gamma_0(2N)$. Since $\psi \cdot (\xi_2, -\xi_0)^t = (\varphi_0, \varphi_2)^t$, using the transformation 
\eqref{eq:25} we get the following.  
\begin{equation*}
\psi(\gamma\tau)(\xi_2, -\xi_0)^t(\gamma\tau) = \chi(d) (c\tau+d)^{k-1} \rho_2(\gamma) \psi(\tau)(\xi_2,-\xi_0)^t(\tau).
\end{equation*}
Using \eqref{trans-xi} and following the argument similar to that given in \cite[p. 312]{AB99}, we get the following transformation property 
(and using the definitions of the representation $\rho_2$, characters $r$ and $\omega_2$ given respectively by \eqref{rho2}, \eqref{r-char} 
and \eqref{omega}). 
\begin{equation}\label{trans-psi}
\psi(\gamma \tau)  = \chi(d) \frac{\overline{\omega}_2(\gamma)}{(u_{11}+u_{13})^2} (c\tau +d)^{k-4} \psi(\tau).
\end{equation}

\noindent 
Now, let $k=2$ and $\chi=1$. Consider the function $\psi \xi (\xi_2^*)^3$, which is a weight $10$ cusp form  
on $\Gamma_0(2N)$ and is divisible by $\eta^{18}(2\tau)$, which follows from \eqref{xi}.  
Therefore, for $N$ odd square-free, by using Corollary 2.3 of \cite{AB2003}, we get $\psi \xi (\xi_2^*)^3 =0$, which implies that $\psi=0$. 
When $N=2$, we consider the function $\psi \xi \xi_2^*$ which is a weight $4$ cusp form on $\Gamma_0(4)$. 
Since $S_4(\Gamma_0(4))=\{ 0\}$, in this case also we get $\psi =0$. 
In other words, $\varphi_0 = \varphi_2 =0$ and hence \eqref{eq:30} implies that $\phi =0$. 
Thus, we have the following theorem. 
\begin{theorem}
For $N =2$ or an odd square-free positive integer, the differential map $D_0\oplus D_2$ is injective 
on $J_{2,2}(\Gamma_0(2N))$. 
\end{theorem}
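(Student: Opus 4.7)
The plan is to exploit the description of $\ker(D_0)$ from Theorem~\ref{thm:3} together with the formula for $D_2(\phi)$ from Theorem~\ref{thm:4} to translate the hypothesis $\phi \in \ker(D_0 \oplus D_2)$ into a divisibility relation among the vector valued modular form $(\varphi_0,\varphi_2)^t$ and the theta-Wronskians $\xi_0,\xi_2$. First I would fix $\phi \in J_{2,2}(\Gamma_0(2N))$ with $D_0(\phi) = D_2(\phi) = 0$. By Theorem~\ref{thm:3} the Jacobi form $\phi$ corresponds to $(\varphi_0,\varphi_2)^t \in M_{1}(\Gamma_0(2N);\rho_2)$ via \eqref{eq:30}, and by Theorem~\ref{thm:4} (specialised to $k=2$) the vanishing of $D_2(\phi)$ forces the single scalar identity $\varphi_0 \xi_0 + \varphi_2 \xi_2 = 0$.

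Next I would encode this identity by introducing the quotient $\psi := \varphi_0/\xi_2 = -\varphi_2/\xi_0$. The first task is holomorphy of $\psi$ on $\mathbb{H}$: I would use the elementary identity $\theta_{2,2}\xi_0 - \theta_{2,0}\xi_2 = 2\theta_{2,1}\xi_2^*$ together with the nonvanishing of $\theta_{2,1}$ and of $\xi_2^*$ (the latter from \eqref{xi}) to rule out any common zero of $\xi_0$ and $\xi_2$. The second task is the automorphy: since $(\varphi_0,\varphi_2)^t$ transforms with weight $k-1$ via $\rho_2$ and the column $(\xi_2,-\xi_0)^t$ satisfies \eqref{trans-xi}, a direct computation (unpacking the definitions of $\rho_2$, $r$, and $\omega_2$ in \eqref{rho2}, \eqref{r-char}, \eqref{omega}) yields that $\psi$ is a modular function for $\Gamma_0(2N)$ of weight $k-4 = -2$ with an explicit automorphy factor involving $\overline{\omega}_2$ and $(u_{11}+u_{13})^{-2}$.

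The substantive step, which I expect to be the main obstacle, is forcing $\psi \equiv 0$ despite its negative weight. The strategy is to multiply $\psi$ by an appropriate product of powers of $\xi$ and $\xi_2^*$ to cancel its inverse automorphy factor and land in a classical space of cusp forms that is known to be trivial or to have a strong vanishing-order condition. Concretely, I would consider $\psi \cdot \xi \cdot (\xi_2^*)^3$ for $N$ odd square-free: using \eqref{xi} this is a holomorphic cusp form on $\Gamma_0(2N)$ of weight $10$ that is divisible by $\eta^{18}(2\tau)$, so Corollary~2.3 of \cite{AB2003} forces it to be zero, hence $\psi = 0$. For $N=2$ the weight $-2$ can be compensated more cheaply by $\psi \cdot \xi \cdot \xi_2^*$, a cusp form of weight $4$ on $\Gamma_0(4)$; since $S_4(\Gamma_0(4)) = \{0\}$ this again gives $\psi = 0$.

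Finally, $\psi = 0$ yields $\varphi_0 = \varphi_2 = 0$, and substituting into \eqref{eq:30} gives $\phi = 0$, completing the proof. The two places that require the most care are (i) the character bookkeeping in deducing \eqref{trans-psi}, where the factor $(u_{11}+u_{13})^{-2}$ must be reconciled with the Nebentypus of $\xi \cdot (\xi_2^*)^j$ so that the product is a genuine scalar-valued modular form with trivial character, and (ii) verifying the divisibility by $\eta^{18}(2\tau)$ that triggers the vanishing criterion of \cite{AB2003} in the odd square-free case.
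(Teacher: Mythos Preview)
Your proposal is correct and follows essentially the same route as the paper's own proof: the reduction via Theorems~\ref{thm:3} and~\ref{thm:4} to $\varphi_0\xi_0+\varphi_2\xi_2=0$, the quotient $\psi$, its holomorphy via the identity $\theta_{2,2}\xi_0-\theta_{2,0}\xi_2=2\theta_{2,1}\xi_2^*$, the transformation law \eqref{trans-psi}, and then the case split using $\psi\,\xi\,(\xi_2^*)^3$ with \cite[Corollary~2.3]{AB2003} for odd square-free $N$ and $\psi\,\xi\,\xi_2^*\in S_4(\Gamma_0(4))=\{0\}$ for $N=2$. The two points you flag as needing care are exactly the ones the paper handles (and leaves somewhat implicit), so your outline matches the published argument in both structure and detail.
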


\section{A certain subspace of the space of Jacobi forms of square-free 
index}\label{sect:2}

Throughout this section we assume that $m$ is a square-free positive integer and $N$ is any 
positive integer. Consider the following subspace of Jacobi forms of index $m$ on $\Gamma_0(mN)$:
$$
J_{k,m}^*(\Gamma_{0}(mN), \chi) := \{\phi \in J_{k,m}(\Gamma_0(mN),\chi)~:~ 
h_{m,r}=0 ~~\mbox{for all}~~ r\neq 0,m\}.
$$
When $m=1$, we have  ~$J_{k,1}^*(\Gamma_0(N),\chi)$ ~=~ $J_{k,1}(\Gamma_0(N),\chi)$.
In the case $m=2$, we relate the subspace ~$J_{k,2}^*(\Gamma_0(2N),\chi)$~ 
with the space ~$J_{k,2}(\Gamma_0(2N),\chi)^0$~ in \S 5. Denote the intersection of the space 
~$J_{k,m}(\Gamma_0(mN),\chi)^0$~ with the subspace ~$J_{k,m}^*(\Gamma_{0}(mN), \chi)$~
by $J_{k,m}^*(\Gamma_{0}(mN), \chi)^{0}$. In this section we study the space ~$J_{k,m}^*(\Gamma_{0}(mN), \chi)^{0}$~
and relate this  to the space ~$J_{k,1}(\Gamma_0(N),\chi)^0$, which was studied by Arakawa and 
B\"ocherer \cite{{AB99}, {AB2003}}. 

\subsection{Connection to the space of modular forms}\label{subsect:1}

Suppose $\phi \in J_{k,m}^*(\Gamma_0(mN),\chi)^0$. Then
$0=\phi(\tau,0)=h_{m,0}(\tau)\theta_{m,0}(\tau) + h_{m,m}(\tau)\theta_{m,m}
(\tau)$. We define a new function by 
\begin{equation}\label{eq:11}
\varphi := \frac{h_{m,0}}{\theta_{m,m}} = \frac{-h_{m,m}}{\theta_{m,0}}.
\end{equation}
Since ~$\theta_{m,0}(\tau)=\theta_{1,0}(m\tau)$~ and ~$\theta_{m,m}(\tau) = \theta_{1,1}(m\tau)$, it follows that 
~$\theta_{m,0}$~ and ~$\theta_{m,m}$~ have no zeros in the upper half-plane.
Therefore ~$\varphi$~ defines a holomorphic function in the upper half-plane. 
For any $\gamma = \begin{pmatrix}a&b\\c&d\end{pmatrix}\in \Gamma_0(mN)$, let $\gamma_m=\begin{pmatrix}a&mb\\c/m&d\end{pmatrix}$ (as in \S 2). 
Then using the transformation \eqref{eq:10} and \eqref{eq:11}, we get the following.  
\begin{equation}\label{eq:12}
\varphi(\gamma\tau)(\theta_{m,m}, -\theta_{m,0})^t(\gamma\tau) = \chi(d)
(c\tau+d)^{k-\frac{1}{2}} {\overline{U_1}(\gamma_m)}\varphi(\tau)(\theta_{m,m}, -\theta_{m,0})^t(\tau)
\end{equation}
We now proceed as in the proof of \cite[Proposition 1]{AB99}.  Using \eqref{eq:7} with $z=0$ in  \eqref{eq:12} gives the following transformation property. For any $\gamma = \begin{pmatrix}a&b\\c&d\end{pmatrix}\in \Gamma_0(mN)$,   
\begin{equation}\label{eq:13}
\varphi(\gamma\tau) = \chi(d){\overline{\omega}_m}(\gamma) 
(c\tau+d)^{k-1} \varphi
(\tau).
\end{equation}

\noindent We now study the behaviour of $\varphi$ at the cusps of $\Gamma_0(mN)$.
We can assume that all the cusps of $\Gamma_0(mN)$ are of the form $\frac{a}{c}$
with gcd$(a,c) = 1$ and $c$ varies over positive divisors of $mN$. 
For such a cusp $s = \frac{a}{c}$, choose a matrix ~$g\in SL_2(\Z)$~ such that
~$g(\infty)=s$. Explicitly, we choose ~$g = \begin{pmatrix}a&b\\c&\frac{m}{\alpha}d'
\end{pmatrix}$, where $\alpha = {\rm{gcd}}(m,c)$ and $b,d'$ are integers such that $a \frac{m}{\alpha}d' - bc = 1$. 
Note that gcd$(a \frac{m}{\alpha}, c) = 1$, since $m$ is 
square-free. Let $d=\frac{m}{\alpha}d'$. We have
\begin{eqnarray*}
(c\tau+d)^{-k+\frac{1}{2}}(h_{m,0}(g \tau), h_{m,m}(g\tau)) &=& (c\tau +d)^{-k+
\frac{1}{2}}\varphi(g\tau)
(\theta_{m,m}(g\tau), -\theta_{m,0}(g\tau))\\
&=& (c\tau + d)^{-k+\frac{1}{2}}\varphi(g\tau)
(\theta_{\alpha,0}
(\frac{m}{\alpha}g\tau), \theta_{\alpha,\alpha}(\frac{m}{\alpha}g\tau))~S.
\end{eqnarray*}
Since $\begin{pmatrix}\frac{m}{\alpha}&0\\0&1\end{pmatrix} g \tau$ ~=~ 
$g'\tau'$, where $g' =\begin{pmatrix}a\frac{m}{\alpha}&b\\c&d'\end{pmatrix} \in \Gamma_0(\alpha)$ 
and $\tau' = \frac{\alpha}{m}\tau$. Using \eqref{eq:7}, we get the following equation.
\begin{equation*}
(c\tau+d)^{-k+\frac{1}{2}}(h_{m,0}(g \tau), h_{m,m}(g\tau)) = (c\tau +d)^{-k+1}\varphi(g\tau)
(\theta_{\alpha,0}(\frac{\alpha}{m}\tau), \theta_{\alpha,\alpha}(\frac{\alpha}{m}\tau))
{U_1(g'_\alpha)}^t ~S,
\end{equation*} 
where $g'_\alpha = \begin{pmatrix} am/\alpha & b\alpha\\ c/\alpha & d'\end{pmatrix}$. 
Now, by a similar argument as in the proof of \cite[Proposition 1]{AB99}, we see that $\varphi$ is 
holomorphic at all the cusps of $\Gamma_0(mN)$. 
This shows that  the function $\varphi$ is a modular form of weight $k-1$ for $\Gamma_0(mN)$ with character
$\chi \overline{\omega}_m$. We denote the space of all such modular forms by $M_{k-1}(\Gamma_0(mN),\chi 
\overline{\omega}_m)$.

\smallskip

Conversely, starting with a modular form  $\varphi \in M_{k-1}(\Gamma_0(mN),\chi 
\overline{\omega}_m)$, we obtain a Jacobi form
\begin{equation}\label{eq:14}
\phi(\tau,z) = \varphi(\tau)(\theta_{m,m}(\tau)\theta_{m,0}^J(\tau,z) - 
\theta_{m,0}(\tau)\theta_{m,m}^J(\tau,z)),
\end{equation}
which belongs to $J_{k,m}^*(\Gamma_0(mN), \chi)^0$. We summarize the result of 
this subsection in the following theorem.
\begin{theorem}\label{thm:1}
There is a linear isomorphism
$$
\Lambda_m^* : J_{k,m}^*(\Gamma_{0}(mN), \chi)^0  \longrightarrow  M_{k-1}(\Gamma_0(mN), \chi{\overline{\omega}_m})
$$
given by ~$\phi \mapsto \varphi$, where ~$\varphi$~ is defined by \eqref{eq:11}. The inverse of
~$\Lambda_m^*$~ is given by \eqref{eq:14}.
\end{theorem}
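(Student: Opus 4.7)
The plan is to establish the theorem by verifying that the two explicit maps---$\Lambda_m^*\colon\phi\mapsto\varphi$ defined by \eqref{eq:11} and its candidate inverse $\varphi\mapsto\phi$ defined by \eqref{eq:14}---are linear, well defined, and mutually inverse. Most of the forward direction has already been carried out in the discussion preceding the theorem, so the first step is merely to collate that material into the single assertion $\Lambda_m^*(\phi)\in M_{k-1}(\Gamma_0(mN),\chi\overline{\omega}_m)$: holomorphy of $\varphi$ on $\mathbb{H}$ uses the non-vanishing of $\theta_{m,0}$ and $\theta_{m,m}$ on $\mathbb{H}$ (which follow from \eqref{eta-identity}); the correct weight and character come from \eqref{eq:13}; and the subsequent cusp-by-cusp analysis adapted from \cite[Proposition 1]{AB99} gives holomorphy of $\varphi$ at every cusp of $\Gamma_0(mN)$.

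The bulk of the new work is the converse, so my second step is to show that for each $\varphi\in M_{k-1}(\Gamma_0(mN),\chi\overline{\omega}_m)$ the function $\phi$ built from \eqref{eq:14} lies in $J_{k,m}^*(\Gamma_0(mN),\chi)^0$. I plan to verify, in turn: holomorphy on $\mathbb{H}\times\mathbb{C}$ and invariance under $(\lambda,\mu)\in\mathbb{Z}^2$, both immediate from the theta series; the Jacobi transformation law of weight $k$, index $m$, character $\chi$ for $\gamma\in\Gamma_0(mN)$; holomorphy at each cusp; and $\phi(\tau,0)=0$ together with the vanishing $h_{m,r}=0$ for $r\neq 0,m$. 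The first and last items are transparent from the definition. Cusp holomorphy is the same argument used in the forward direction, run in reverse. For the transformation law, I use the observation from Section~2 that, for $\gamma\in\Gamma_0(m)$, the action of $U_m(\gamma)$ on $(\theta_{m,0}^J,\theta_{m,m}^J)^t$ reduces to multiplication by $U_1(\gamma_m)$; a short $2\times 2$ calculation then shows that the antisymmetric combination $\theta_{m,m}\theta_{m,0}^J-\theta_{m,0}\theta_{m,m}^J$ picks up the scalar
$$
\det\bigl(U_1(\gamma_m)\bigr)\,(c\tau+d)\,e^{2\pi i m\,cz^2/(c\tau+d)}=\omega_m(\gamma)\,(c\tau+d)\,e^{2\pi i m\,cz^2/(c\tau+d)}.
$$
Multiplying by $\varphi(\gamma\tau)=\chi(d)\overline{\omega}_m(\gamma)(c\tau+d)^{k-1}\varphi(\tau)$, the $\omega_m$ and $\overline{\omega}_m$ factors cancel and one lands exactly on the Jacobi transformation of weight $k$, index $m$, character $\chi$.

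Finally, linearity of both maps is transparent, and mutual inversion is a short check: starting from $\phi$, the relation $h_{m,0}\theta_{m,0}+h_{m,m}\theta_{m,m}=0$ coming from $\phi(\tau,0)=0$ together with $h_{m,r}=0$ for $r\neq 0,m$ lets one rewrite \eqref{eq:14} as the original $\phi$; conversely, reading off the theta components of \eqref{eq:14} and using the linear independence of $\{\theta_{m,r}^J\}$ yields $h_{m,0}=\varphi\theta_{m,m}$, whence $\varphi=h_{m,0}/\theta_{m,m}$ is recovered by the non-vanishing of $\theta_{m,m}$. The main obstacle I anticipate is the determinant bookkeeping in the transformation-law step: one must produce exactly $\det(U_1(\gamma_m))=\omega_m(\gamma)$ from the antisymmetric combination of theta products while simultaneously tracking the index-$m$ Jacobi exponent $e^{2\pi i m\,cz^2/(c\tau+d)}$ and the weight-$\tfrac12$ factors arising from \eqref{eq:3} and \eqref{theta-trans}, and then pair this cleanly against the character $\chi\overline{\omega}_m$ and weight $k-1$ carried by $\varphi$ so as to land precisely in weight $k$ with character $\chi$. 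Once this bookkeeping is carried out, the remainder of the proof is routine.
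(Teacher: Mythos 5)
Your proposal is correct and takes essentially the same approach as the paper: the forward direction simply collates the paper's own development (definition \eqref{eq:11}, the transformation \eqref{eq:13} obtained from \eqref{eq:10} and \eqref{eq:7}, and the cusp analysis following \cite[Proposition 1]{AB99}), while your determinant computation for the converse---showing that $\theta_{m,m}\theta_{m,0}^J-\theta_{m,0}\theta_{m,m}^J$ acquires the factor $\omega_m(\gamma)(c\tau+d)e^{2\pi i m\,cz^2/(c\tau+d)}$, which cancels against the character $\chi\overline{\omega}_m$ of $\varphi$ since $|\omega_m(\gamma)|=1$---is precisely the verification the paper leaves implicit when it asserts that \eqref{eq:14} lies in $J_{k,m}^*(\Gamma_0(mN),\chi)^0$.
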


\subsection{Connection to the space of cusp forms}\label{subsect;2}

Let $\phi\in J_{k,2}^*(\Gamma_0(mN),\chi)^0$ be of the form given by 
\eqref{eq:14}.
By
applying $D_2$ and using the differential equations given by 
\eqref{eq:16}, 
 we have
\begin{equation}\label{D-xi}
D_2(\phi)(\tau) = 4m k \varphi(\tau)
(\theta_{m,m}\theta_{m,0}' - \theta_{m,0}
\theta_{m,m}')(\tau)
= 4m^2k \varphi(\tau) \xi_m^*(\tau),
\end{equation}
where $\xi_m^*(\tau)$ is the cusp form defined by \eqref{xi_m}. Now define the space
\begin{equation*}\label{eq:17} 
S_{k+2}^*(\Gamma_0(mN),\chi)^0 := \{ f\in S_{k+2}(\Gamma_0(mN),\chi)~:~ 
f/\xi^*_m 
\in M_{k-1}(\Gamma_0(mN), \chi\overline{\omega}_m)\}.
\end{equation*}
We summarize the results of \S 4.1 and \S 4.2 in the following theorem. 
\begin{theorem}\label{thm:2}
The map $D_2 : J_{k,m}(\Gamma_{0}(mN), \chi) \longrightarrow S_{k+2}
(\Gamma_{0}(mN), \chi)$ induces an isomorphism between 
$J_{k,m}^*(\Gamma_0(mN),
\chi)^0$ and $S_{k+2}^*(\Gamma_0(mN),\chi)^0$. Combining this with 
\linebreak 
Theorem {\rm \ref{thm:1}},  
we get the following commutative diagram of isomorphisms:
$$
\xymatrix{
 & {J_{k,m}^*(\Gamma_0(mN),\chi)^0} \ar[ld]_{\Lambda_m^*} \ar[dr]^{D_2} \\
  {M_{k-1}(\Gamma_0(mN),\chi \overline{\omega}_m)} \ar@{<->}[rr] && 
  S_{k+2}^*(\Gamma_0(mN),\chi)^0 } 
$$            
where the isomorphism in the bottom is given by $\varphi \mapsto 4m^2k 
\xi^*_m\varphi$.
\end{theorem}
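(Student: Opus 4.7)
The plan is to assemble the isomorphism directly from the formula \eqref{D-xi} already derived in the excerpt and from Theorem~\ref{thm:1}. Essentially, all the structural work is done: the proof is a matter of packaging the three linear maps in the diagram, checking that they land in the stated spaces, and verifying bijectivity via the non-vanishing of $\xi_m^*$.

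First, I would verify that $D_2$ sends $J_{k,m}^*(\Gamma_0(mN),\chi)^0$ into $S_{k+2}^*(\Gamma_0(mN),\chi)^0$. Given $\phi \in J_{k,m}^*(\Gamma_0(mN),\chi)^0$, Theorem~\ref{thm:1} produces $\varphi = \Lambda_m^*(\phi) \in M_{k-1}(\Gamma_0(mN),\chi\overline{\omega}_m)$ with $\phi$ given by \eqref{eq:14}. The identity \eqref{D-xi} then gives $D_2(\phi) = 4m^2 k\, \xi_m^*\, \varphi$. Since $D_2$ is already known to map $J_{k,m}(\Gamma_0(mN),\chi)$ into $S_{k+2}(\Gamma_0(mN),\chi)$, we have $D_2(\phi) \in S_{k+2}(\Gamma_0(mN),\chi)$, and by construction $D_2(\phi)/\xi_m^* = 4m^2k\,\varphi \in M_{k-1}(\Gamma_0(mN),\chi\overline{\omega}_m)$; hence $D_2(\phi) \in S_{k+2}^*(\Gamma_0(mN),\chi)^0$.

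Next I would show bijectivity. For injectivity, suppose $D_2(\phi)=0$ with $\phi \in J_{k,m}^*(\Gamma_0(mN),\chi)^0$. Then $4m^2k\, \xi_m^*\, \varphi \equiv 0$; since $\xi_m^*(\tau) = -\pi i m\, \eta^6(m\tau)$ has no zeros in $\mathbb{H}$ by \eqref{xi}, we conclude $\varphi \equiv 0$, and then \eqref{eq:14} forces $\phi \equiv 0$. For surjectivity, given $f \in S_{k+2}^*(\Gamma_0(mN),\chi)^0$, the definition of this space provides $\varphi := f/(4m^2k\,\xi_m^*) \in M_{k-1}(\Gamma_0(mN),\chi\overline{\omega}_m)$ (well-defined since $\xi_m^*$ is nowhere zero on $\mathbb{H}$). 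Feeding this $\varphi$ into \eqref{eq:14} produces a preimage $\phi \in J_{k,m}^*(\Gamma_0(mN),\chi)^0$ by Theorem~\ref{thm:1}, and \eqref{D-xi} gives $D_2(\phi) = 4m^2 k\, \xi_m^*\, \varphi = f$.

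Finally, I would confirm the commutativity of the triangle. The bottom arrow $\varphi \mapsto 4m^2k\,\xi_m^*\,\varphi$ is an isomorphism $M_{k-1}(\Gamma_0(mN),\chi\overline{\omega}_m) \to S_{k+2}^*(\Gamma_0(mN),\chi)^0$ essentially by definition of the target space, with inverse $f \mapsto f/(4m^2k\,\xi_m^*)$ (well-defined by non-vanishing of $\xi_m^*$ on $\mathbb{H}$, and with the required holomorphy at the cusps built into the definition of $S_{k+2}^*(\Gamma_0(mN),\chi)^0$). Commutativity of the diagram is then just the equality $D_2(\phi) = 4m^2k\,\xi_m^*\,\Lambda_m^*(\phi)$, which is precisely \eqref{D-xi}. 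There is no genuine obstacle here; the only mild point to be careful about is the cusp behaviour of $f/\xi_m^*$, but this is subsumed in the very definition of $S_{k+2}^*(\Gamma_0(mN),\chi)^0$, so the argument reduces to bookkeeping.
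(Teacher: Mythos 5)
Your proposal is correct and follows essentially the same route as the paper, which proves the theorem simply by summarizing \S 4.1 and \S 4.2: the identity \eqref{D-xi} together with Theorem \ref{thm:1}, the nowhere-vanishing of $\xi_m^*$ on $\mathbb{H}$ from \eqref{xi}, and the definition of $S_{k+2}^*(\Gamma_0(mN),\chi)^0$ give the bijectivity and commutativity exactly as you describe. Your write-up merely makes explicit the bookkeeping (injectivity, surjectivity, cusp behaviour absorbed into the definition of the target space) that the paper leaves implicit.
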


\subsection{Connection to the space ~$J_{k,1}(\Gamma_0(N),\chi)^0$}
\label{subsect:3}
Following \cite[Theorem 3.4]{e-z} for the congruence subgroup ~$\Gamma_0(N)$~, we see that the operator $D_0\oplus D_2$ 
is injective
on $J_{k,1}(\Gamma_0(N))$ for all positive even integers $k$. Also by \cite[Theorem 4.3]{AB2003}, $D_0$ is injective on 
$J_{2,1}(\Gamma_0(N))$ for square-free $N$. Now we deduce similar kind of
results for the space $J_{k,m}^*(\Gamma_0(mN))$ in the following two corollaries.
\begin{corollary}\label{cor:1}
The differential map $D_0\oplus D_2 : J_{k,m}^*(\Gamma_0(mN))
\longrightarrow M_k(\Gamma_0(mN))\oplus S_{k+2}(\Gamma_0(mN))$ is injective.
\end{corollary}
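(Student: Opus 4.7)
The plan is to deduce the injectivity directly from the isomorphism statement of Theorem \ref{thm:2}, with essentially no additional work beyond what has already been packaged there. Suppose $\phi \in J_{k,m}^*(\Gamma_0(mN))$ lies in $\ker(D_0\oplus D_2)$. The condition $D_0(\phi)=0$ places $\phi$ in $J_{k,m}^*(\Gamma_0(mN))^0$, so by Theorem \ref{thm:1} there is an associated weight $k-1$ modular form $\varphi = \Lambda_m^*(\phi) \in M_{k-1}(\Gamma_0(mN),\overline{\omega}_m)$ with reconstruction formula \eqref{eq:14}.

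Next I would invoke the computation \eqref{D-xi}, which gives $D_2(\phi) = 4m^2 k\,\varphi\,\xi_m^*$. The cusp form $\xi_m^*(\tau) = -\pi i m\,\eta^6(m\tau)$ from \eqref{xi} is nowhere vanishing on $\mathbb{H}$, and for $k$ a positive integer weight the scalar $4m^2 k$ is nonzero. Hence the hypothesis $D_2(\phi) = 0$ forces $\varphi \equiv 0$, and feeding this back into the reconstruction formula \eqref{eq:14} yields $\phi = 0$.

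The statement really is a corollary in the strict sense: all substantive work is already contained in Theorems \ref{thm:1} and \ref{thm:2}, and once the isomorphism $D_2\colon J_{k,m}^*(\Gamma_0(mN))^0 \to S_{k+2}^*(\Gamma_0(mN))^0$ is in hand, the only remaining content is that this isomorphism, being implemented by multiplication by a nowhere vanishing holomorphic function, has trivial kernel. There is no genuine obstacle here; the only point worth flagging is the tacit positivity of $k$ so that the scalar factor in \eqref{D-xi} does not degenerate.
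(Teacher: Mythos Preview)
Your proof is correct and follows essentially the same approach as the paper: both arguments combine the vanishing of $D_0(\phi)$ to write $D_2(\phi)=4m^2k\,\varphi\,\xi_m^*$ via \eqref{D-xi}, and then use the non-vanishing of $\xi_m^*$ on $\mathbb{H}$ from \eqref{xi} to conclude $\varphi=0$. The paper's proof is simply a two-line compression of exactly what you have written out.
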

\begin{proof}
It is observed in \S 2 that $\xi_m^*$~ has no zeros in the upper half-plane (see \eqref{xi}). 
The corollary follows by using this fact along with \eqref{D-xi}.
\end{proof}

\begin{corollary}\label{cor:2}
The restriction map $D_0 : J_{2,m}^*(\Gamma_0(mN))
\longrightarrow M_2(\Gamma_0(mN))$ is injective, when $mN$ is square-free,
i.e.,  the kernel space $J_{2,m}^*(\Gamma_0(mN))^0 = \{0\}$.
\end{corollary}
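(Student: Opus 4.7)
The plan is to combine Theorem \ref{thm:1}, Theorem \ref{thm:2}, Corollary \ref{cor:1}, and the Arakawa--B\"ocherer vanishing result (Corollary~2.3 of \cite{AB2003}) that was already invoked in the proof of Theorem~3.4. Take $\phi \in J_{2,m}^*(\Gamma_0(mN))^0$; we want to conclude $\phi=0$.

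\medskip
\noindent \textbf{Step 1 (use Theorem \ref{thm:1}).} Since $\phi\in \operatorname{Ker}(D_0)\cap J_{k,m}^*(\Gamma_0(mN))$ with $k=2$, Theorem \ref{thm:1} produces a weight-one modular form $\varphi \in M_{1}(\Gamma_0(mN),\overline{\omega}_m)$ such that $\phi$ is given by \eqref{eq:14}. So it suffices to show $\varphi \equiv 0$.

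\medskip
\noindent \textbf{Step 2 (apply $D_2$).} By \eqref{D-xi} with $k=2$ together with \eqref{xi},
\[
D_2(\phi)(\tau) \;=\; 8m^{2}\, \varphi(\tau)\,\xi_m^{*}(\tau) \;=\; -8\pi i\, m^{3}\,\varphi(\tau)\,\eta^{6}(m\tau),
\]
which lies in $S_4(\Gamma_0(mN))$ (trivial character, since the characters $\omega_m$ and $\overline{\omega}_m$ cancel). Because $\varphi$ is holomorphic on $\mathbb{H}$, the cusp form $D_2(\phi)$ is divisible by $\eta^{6}(m\tau)$ with holomorphic quotient. Since $mN$ is assumed square-free, this is precisely the setting in which Corollary~2.3 of \cite{AB2003} applies (exactly as in the argument given in the proof of Theorem \ref{thm:4} for the odd square-free case, where the same type of $\eta$-divisibility of a weight-4 cusp form was used). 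Hence $D_2(\phi)=0$.

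\medskip
\noindent \textbf{Step 3 (conclude).} By Step 2, $\phi \in \operatorname{Ker}(D_0)\cap \operatorname{Ker}(D_2)$, and Corollary \ref{cor:1} asserts that $D_0\oplus D_2$ is injective on $J_{k,m}^*(\Gamma_0(mN))$. Therefore $\phi=0$. Equivalently, since $\xi_m^{*}$ has no zeros in $\mathbb{H}$ by \eqref{xi}, the equality $8m^{2}\varphi\,\xi_m^{*}=0$ already forces $\varphi=0$, and then \eqref{eq:14} gives $\phi=0$.

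\medskip
\noindent \textbf{Main obstacle.} The only non-routine point is the invocation of Corollary~2.3 of \cite{AB2003}: one must check that the square-freeness of $mN$ ensures the relevant vanishing orders of $\eta^{6}(m\tau)$ at every cusp of $\Gamma_0(mN)$ are large enough for the divisibility to force the cusp form to be zero. This is a direct analog of the step used in the odd square-free case of Theorem~\ref{thm:4}, replacing $\Gamma_0(2N)$ and $\eta^{6}(2\tau)$ there by $\Gamma_0(mN)$ and $\eta^{6}(m\tau)$ here, and the rest of the proof is purely formal.
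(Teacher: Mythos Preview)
Your argument is essentially the same as the paper's: reduce to a weight-$4$ cusp form on $\Gamma_0(mN)$ divisible by $\eta^6(m\tau)$ and then kill it using Corollary~2.3 of \cite{AB2003}. The difference is in how the ``main obstacle'' you flag is actually handled. You leave the applicability of \cite[Corollary~2.3]{AB2003} to divisibility by $\eta^6(m\tau)$ as something to be checked by an analogue of the computation in Theorem~3.4; the paper instead disposes of it in one line by first applying the operator $\tau\mapsto -1/(m\tau)$, which sends the subspace of $S_4(\Gamma_0(mN))$ divisible by $\eta^6(m\tau)$ to the subspace divisible by $\eta^6(\tau)$, and the latter is exactly the hypothesis of \cite[Corollary~2.3, (2.4)]{AB2003} for square-free level. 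This avoids any cusp-by-cusp vanishing-order computation. So your plan is correct, but you should incorporate this Fricke-type normalisation rather than leaving the obstacle open; without it the invocation of \cite{AB2003} is not quite direct, since that result is formulated for $\eta(\tau)$-divisibility, and the analogy with Theorem~3.4 (where the exponent is $18$, not $6$) does not literally carry over.
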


\begin{proof}
By Theorem \ref{thm:2}, we see that the spaces $J_{2,m}^*(\Gamma_0(mN))^0$ 
and $S_4^*(\Gamma_0(mN))^0$ are isomorphic, where $S_4^*(\Gamma_0(mN))^0$ is the 
subspace of $S_4(\Gamma_0(mN))$ whose functions are divisible by 
$\xi_m^*$, in other words, divisible by $\eta^6(m\tau)$. By applying the operator ~
$\tau \mapsto -\frac{1}{m\tau}$, the subspace $S_4^*(\Gamma_0(mN))^0$ is equal to the subspace 
whose functions are divisible by $\eta^6(\tau)$. Since ~$mN$~ is square-free,
it follows from \cite[Corollary 
2.3, (2.4)]{AB2003}) that ~$S_4^*(\Gamma_0(mN))^0 = \{0\}$. 
\end{proof}

\smallskip

Let $J_{k,m}(\Gamma_0(mN),\nu)$ denote the space of Jacobi forms
of index $m$ on $\Gamma_0(mN)$ with character $\nu$. 

\begin{corollary}\label{cor:3}
The two kernel spaces  
$J_{k,1}(\Gamma_0(mN),\chi)^0$ and
$J_{k,m}^*(\Gamma_0(mN),\chi\omega_m
\overline{\omega})^0$ are isomorphic.
\end{corollary}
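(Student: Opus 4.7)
The plan is to reduce the claim to two applications of Theorem~\ref{thm:1}, exhibiting both kernel spaces as isomorphic to one and the same space of scalar modular forms.

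For the first side, I note that $J^{*}_{k,1}(\Gamma_0(N'),\chi) = J_{k,1}(\Gamma_0(N'),\chi)$ for every positive integer $N'$ (the remark just after the definition of $J^{*}_{k,m}$). Applying Theorem~\ref{thm:1} with index $1$ and level $mN$, and using $\omega_1 = \omega$, I obtain a linear isomorphism
\[
\Lambda_1^{*} : J_{k,1}(\Gamma_0(mN),\chi)^0 \;\xrightarrow{\sim}\; M_{k-1}\bigl(\Gamma_0(mN),\,\chi\overline{\omega}\bigr).
\]
For the second side, I apply Theorem~\ref{thm:1} again, now with the given square-free index $m$ and the twisted character $\chi' := \chi\,\omega_m\,\overline{\omega}$, which gives
\[
\Lambda_m^{*} : J_{k,m}^{*}(\Gamma_0(mN),\chi\omega_m\overline{\omega})^0 \;\xrightarrow{\sim}\; M_{k-1}\bigl(\Gamma_0(mN),\,\chi\omega_m\overline{\omega}\cdot\overline{\omega}_m\bigr).
\]
The target character simplifies as $\chi\omega_m\overline{\omega}\cdot\overline{\omega}_m = \chi\overline{\omega}$, so both right-hand sides are the same space. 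The composition $(\Lambda_m^{*})^{-1}\circ \Lambda_1^{*}$ then yields the desired isomorphism between $J_{k,1}(\Gamma_0(mN),\chi)^0$ and $J_{k,m}^{*}(\Gamma_0(mN),\chi\omega_m\overline{\omega})^0$.

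The only piece of bookkeeping is to check that $\chi\omega_m\overline{\omega}$ is a legitimate character on $\Gamma_0(mN)$, and this is immediate: $\omega$ is a character on $SL_2(\mathbb{Z})$ (from \cite{AB99}) and $\omega_m$ is a character on $\Gamma_0(m)$ by \eqref{omega}, and both groups contain $\Gamma_0(mN)$. The parity condition $\chi\omega_m\overline{\omega}(-1) = (-1)^k$ needed for the right-hand Jacobi space to be nontrivial is matched automatically, since both sides become isomorphic to the common target $M_{k-1}(\Gamma_0(mN),\chi\overline{\omega})$, which vanishes trivially if any parity fails. Thus no genuine obstacle arises; the corollary is essentially a transitivity statement obtained by composing two instances of Theorem~\ref{thm:1}, with the character twist on the right chosen precisely to cancel the $\overline{\omega}_m$ produced by the general-$m$ isomorphism against the $\overline{\omega}$ produced by the $m=1$ isomorphism.
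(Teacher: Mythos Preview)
Your proof is correct and follows essentially the same approach as the paper: both kernel spaces are shown to be isomorphic to the common space $M_{k-1}(\Gamma_0(mN),\chi\overline{\omega})$, and the isomorphism between them is then obtained by transitivity. The only cosmetic difference is that the paper cites \cite[Theorem~1]{AB99} directly for the index-$1$ side rather than specializing Theorem~\ref{thm:1} to $m=1$, but these are the same statement (cf.\ Remark~\ref{remark:1}).
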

\begin{proof}
Using \cite[Theorem 1]{AB99} and Theorem \ref{thm:1}, both the spaces ~$J_{k,1}(\Gamma_0(mN),\chi)^0$~ and
\linebreak
~$J_{k,m}^*(\Gamma_0(mN),\chi\omega_m
\overline{\omega})^0$~ are isomorphic
to the same space ~$M_{k-1}(\Gamma_0(mN),\chi\overline{\omega})$. This proves the corollary.
\end{proof}

\begin{corollary}\label{cor:4}
Let $N$ be a square-free positive integer and coprime to $m$.
Then $J_{2,m}^*(\Gamma_0(mN),
\omega_m\overline{\omega})^0 = \{0\}$.
\end{corollary}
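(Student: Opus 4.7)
The plan is to deduce this from Corollary \ref{cor:3} combined with the injectivity result of Arakawa–B\"ocherer for index one Jacobi forms of weight two.

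First I would specialise Corollary \ref{cor:3} to the trivial character $\chi = 1$. With this choice, the target character $\chi\omega_m\overline{\omega}$ becomes $\omega_m\overline{\omega}$, so the corollary furnishes a linear isomorphism
\begin{equation*}
J_{2,1}(\Gamma_0(mN))^0 \;\cong\; J_{2,m}^*(\Gamma_0(mN),\omega_m\overline{\omega})^0.
\end{equation*}
Thus it suffices to prove that the left-hand side is trivial.

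Next I would invoke Theorem 4.3 of \cite{AB2003}, which asserts that the restriction map $D_0$ is injective on $J_{2,1}(\Gamma_0(N'))$ whenever $N'$ is square-free. Since $N$ is square-free and coprime to $m$, and $m$ is itself square-free (a standing hypothesis of \S 4), the level $mN$ is again square-free. Applying the cited result with $N' = mN$ yields $J_{2,1}(\Gamma_0(mN))^0 = \{0\}$. Transporting this vanishing across the isomorphism above gives $J_{2,m}^*(\Gamma_0(mN),\omega_m\overline{\omega})^0 = \{0\}$, as claimed.

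There is essentially no obstacle here: the whole argument is a matching of characters plus a citation, and the only thing to be checked is that the coprimality and square-freeness assumptions are exactly what is needed to make $mN$ square-free so that Theorem 4.3 of \cite{AB2003} applies.
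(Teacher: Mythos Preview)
Your proposal is correct and follows exactly the same route as the paper: specialise Corollary~\ref{cor:3} with $\chi=1$ to reduce to $J_{2,1}(\Gamma_0(mN))^0$, then apply \cite[Theorem~4.3]{AB2003} using that $mN$ is square-free. The paper's proof is the same two-line combination of these two ingredients, only stated more tersely.
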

\begin{proof}
Since $mN$ is square-free, the corollary follows by using Corollary \ref{cor:3} together with \cite[Theorem 4.3]{AB2003}.
\end{proof}

\section{Concluding Remarks}\label{sect:4}

\begin{remark} \label{remark:1}
{\rm 
Note that Theorem \ref{thm:2} reduces to \cite[Theorem 2]{AB99} 
in the case of index 1. Moreover, Corollary \ref{cor:3} shows that there may exist non-trivial examples of isomorphic subspaces in the spaces of Jacobi forms of different index.
}
\end{remark}

\begin{remark} \label{remark:2}
{\rm 
The space $J^*_{k,m}(\Gamma_0(mN), \chi)$ as defined in the section 
\ref{sect:2} can be 
quite large for some values of ~$m$. For example, if $k$ is even and $m$ is 
square-free, 
it is easy to verify that dim $J^*_{k,m}(\Gamma_0(mN)) \geqslant$ dim 
$J_{k,m}(SL_2(\mathbb{Z}))$. Take any Jacobi form $\phi = 
\sum_{j=0}^{2m-1}h_{m,j}\theta^J_{m,j} $ of even weight $k$ and 
square-free index $m$ for the full Jacobi
group $SL_2(\mathbb{Z}) \ltimes \mathbb{Z}^2$. Using \eqref{eq:7}
and \eqref{eq:10},
the function defined by $\psi_{0,m} := h_{m,0}\theta^J_{m,0} 
+ h_{m,m}\theta^J_{m,m}$ is a Jacobi form of weight $k$ and index $m$ for the 
Jacobi group $\Gamma_0(m) \ltimes \mathbb{Z}^2$. Define a mapping 
$ J_{k,m}(SL_2(\mathbb{Z})) \longrightarrow J^*_{k,m}(\Gamma_0(m))$ by 
$\phi(\tau,z) \longmapsto \psi_{0,m}(\tau,z)$. By \cite[Theorem 1]{S05}
we know that this map is injective. Hence, for any positive integer $N$
we have the inequalities ~dim $ J^*_{k,m}(\Gamma_0(mN)) \geqslant$ dim $ J^*_{k,m}(\Gamma_0(m))
\geqslant$ dim $ J_{k,m}(SL_2(\mathbb{Z}))$.
}
\end{remark}

\begin{remark} \label{remark:3}
{\rm 
Suppose $N$ is either $2$ or an odd square-free positive integer.
By using \eqref{eq:10}, it is easy to see that the space $J^*_{k,2}(\Gamma_0(2N),\chi)$ is isomorphic
to $M_{k-\frac{1}{2}}(\Gamma_0(2N),\chi; U^*_2)$, the space of vector valued
modular forms of weight $k-\frac{1}{2}$ and (projective) representation 
$U^*_2$ defined on $\Gamma_0(2)$ by $U^*_2(\gamma) = \overline{U_1}(\gamma_2)$.
It is also not hard to verify that the map
~$(\varphi_0, \varphi_2)^t \longmapsto 
(\frac{\varphi_0}{\theta_{2,1}},\frac{\varphi_2}{\theta_{2,1}})^t$ is an isomorphism from the space
~$M_{k-\frac{1}{2}}(\Gamma_0(2N),\chi; U^*_2)$ onto the space ~$M_{k-1}(\Gamma_0(2N),\chi; \rho_2)$. 
Combining this observation with Theorem \ref{thm:3}, we see that the  
spaces $J_{k,2}(\Gamma_0(2N),\chi)^0$ and 
$J^*_{k,2}(\Gamma_0(2N),\chi)$ are isomorphic.
}
\end{remark}

\begin{remark} \label{remark:4}
{\rm 
Suppose that ~$N$~ is either $2$ or an odd square-free positive integer.
If we consider the space $J^*_{k,2}(\Gamma_0(2N),\chi)^0$ 
as a subspace of the full kernel space $J_{k,2}(\Gamma_0(2N),\chi)^0$,  
then under the isomorphism diagram of Theorem \ref{thm:4} 
any $\phi \in J^*_{k,2}(\Gamma_0(2N),\chi)^0$ will 
correspond to a vector valued modular form ~$(\varphi_0, \varphi_2)^t$~ 
which satisfies the property ~$\varphi_0
\theta_{2,0} + \varphi_2\theta_{2,2} = 0$. Moreover, the image of the space
$J^*_{k,2}(\Gamma_0(2N),\chi)^0$
under $\Lambda_2$, that is, the space ~$\Lambda_2(J^*_{k,2}(\Gamma_0(2N),\chi)^0)$~ is isomorphic to the space 
$M_{k-1}(\Gamma_0(2N), \chi\overline{\omega}_2)$ and the isomorphism is given 
by 
$(\varphi_0, \varphi_2)^t \longmapsto 
\frac{\varphi_0\theta_{2,1}}{\theta_{2,2}} (= \frac{-\varphi_2\theta_{2,1}}
{\theta_{2,0}})$.
}
\end{remark}
 
\smallskip

\noindent {\bf Acknowledgements:} Our heartfelt thanks to Professor S. B\"ocherer for providing many valuable 
suggestions and also for sharing his views on this work. We also thank the referee for his/her meticulous reading 
of the manuscript and suggesting corrections which improved the presentation.

\smallskip

\end{document}